\makeatletter
\def\ps@firstpage{\ps@plain
  \def\@oddfoot{\normalfont\scriptsize \hfil\thepage\hfil}
  \def\@oddhead{\article@logo\hss}}
\def\article@logo{\vbox to\headheight{\@parboxrestore \@logofont \noindent
\href{http://www.worldscientific.com/worldscibooks/10.1142/6719}{Differential Geometry and Its Applications}
\newline Proc. of the 10\textsuperscript{th} International Conference (Olomouc, August 27--31, 2007)
    \newline World Scientific Publishing, Hackensack, NJ, 2008, p.~635--642.
    \par\vss
  }}
\def\@logofont{\fontsize{8}{9.6\p@}\selectfont}
\makeatother
\documentclass{procs}
\setcounter{page}{635}
\usepackage[pdfstartview=FitH,pdfnewwindow=true,pdftitle={Variationality of geodesic circles},pdfdisplaydoctitle=true,colorlinks,pagebackref,bookmarksopen,breaklinks]{hyperref}
\newcommand{\bu}{{\boldsymbol u}}
\newcommand{\bw}{{\boldsymbol {u'}}}
\newcommand{\bp}{{\boldsymbol \pi}}
\newcommand{\bq}{{\boldsymbol \pi^{(1)}}}
\newcommand{\bpr}{{{\boldsymbol'}}}
\newcommand{\du}{{\boldsymbol {\dot u}}}
\newcommand{\pr}[2]{{\boldsymbol{#1\cdot#2}}}
\newcommand{\pw}[2]{({#1}\wedge{#2})}
\newcommand{\nbu}{{\left\|\boldsymbol{u}\right\|}}
\newcommand{\nw}[2]{{\left\|{#1}\wedge{#2}\right\|}}
\newcommand{\bcdot}{{\boldsymbol{\cdot}}}
\newcommand{\bpw}[2]{(\boldsymbol{#1}\wedge\boldsymbol{#2})}
\newcommand{\bnw}[2]{{\left\|\boldsymbol{#1}\wedge\boldsymbol{#2}\right\|}}
\begin{document}
\title{Variationality of geodesic circles in two dimensions}
\author{R. Ya. Matsyuk}
\address{Institute for Applied Problems in Mechanics and Mathematics,\\
15 Dudayev St., L'viv, 79005, Ukraine\\
\url{http://iapmm.lviv.ua/12/eng/files/st_files/matsyuk.htm}\\
E-mail: matsyuk@lms.lviv.ua, romko.b.m@gmail.com}
\begin{abstract}
This note treats the notion of Lagrange derivative for the third
order mechanics in the context of covariant Riemannian geometry.
The variational differential equation for geodesic circles in two
dimensions is obtained. The influence of the curvature tensor on
the Lagrange derivative leads to the emergence of the notion of
quasiclassical spin in the pseudo-Riemannian case.
\end{abstract}
\keywords{Ostrohrads'kyj mechanics; Inverse variational problem; Concircular
geometry; Classical spin.}
\bodymatter
\thispagestyle{firstpage}
\section{Introduction}
This is a note on the variational formulation for the differential equations
of geodesic circles in two-dimensional Riemannian space, although the results
apply straightforward to the pseudo-Riemannian case. The
geodesic curves $x^i(t)$ obey with respect to the natural parameter $s$
the third order differential equation~\cite{matsyuk:YanoCon}
\[
\dfrac{D^3x^i}{ds^3}+g_{lj}\dfrac{D^2x^l}{ds^2}\dfrac{D^2x^j}{ds^2}\dfrac{Dx^i}{ds}=0\,,
\]
and they are exactly characterized by the property that the
(signed) Frenet curvature $k$ keeps constant along them. In view
of the Proposition~\ref{matsyuk:Ham} below we could have
immediately stated that the variational functional $\int\! kdt$
provides an answer to the problem, all the more that in two
dimensions $\sqrt{k^2}$ depends linearly on the second derivatives
of the coordinates along the curve thus producing exactly {\em the
third order\/} variational (called Euler-Poisson) equation.

However, we wish to investigate, to what extent this answer in
predefined by the limiting case of the Euclidean space --- the
local model of the Riemannian one. With this idea in mind we start
by recalling one solution\cite{matsyuk:MatMet} of the {\em invariant inverse
variational problem\/} in two-dimensional Euclidean space for
a third order variational equation possessing the first
integral~$k$.

Before proceeding further, it is necessary to agree
about some notations and to recall some basic calculus on the
second order Ehresmann velocity space
$T^2M\overset{\mathrm{def}}=J^2_0(\mathbb R,M)$ of jets from
$\mathbb R$ to our manifold $M$ starting at $0\in \mathbb R$ (as
possible source of references we can recommend, for example,
Refs.~\refcite{matsyuk:Ibort,matsyuk:deLeon,matsyuk:Yano}).

\section{Calculus on the higher order velocities space}

Let $u^i,\dot u^i$ denote the standard fiber coordinates in $T^2M$. In case
of an affine space $M$, we use the vector notations $\bu,\du$ for that
tuple. In future we shall profoundly also use another tuple of coordinates,
namely, $\bu,\bw$, where
\begin{equation}\label{matsyuk:uprime}
u\boldsymbol'^i=\dot u^i+\Gamma^i_{lj}u^lu^j
\end{equation}
stands for the
covariant derivative of~$\bu$.
Let us recall some operators acting in the algebra of differential forms, defined
on the velocity spaces of the sequential orders:
\begin{list}{$\bullet$}{\setlength{\leftmargin}{10pt}}
\item The total derivative:
\[d_Tf=u^i\dfrac{\partial f}{\partial x^i}+\dot u^i\dfrac{\partial f}{\partial u^i}
    +\ddot u^i\dfrac{\partial f}{\partial \dot u^i}+\ldots
\]
This is a derivation of degree zero and of the type $d$, {\it i.e.} who
commutes with the exterior differential: $dd_T=d_Td$.
\item
For each $k=1,2,3$, let $u_{(k)}^i=\overset{\overbrace{\dotsm}^{k}}{u}{}^i$,
$u_{(0)}^i=u^i$, $x_{(k)}^i=u_{(k-1)}^i$, and $x_{(0)}^i=x^i$.
For each $r=0,1,2,3,\ldots$, we recall the following derivations of degree zero and of the type~$i$,
{\it i.e.} who produce zeros while acting on the ring of functions:
\begin{equation}\label{matsyuk:iota}
\begin{split}
& \iota_r(f)=0\,,\\
\iota_r(dx_{(k)}^i)
=\frac{k!}{(k-r)!}dx_{(k-r)}^i,
& \quad \mathrm{and}\quad \iota_r(dx_{(k)}^i)=0,\quad \mathrm{if}\quad r>k.
\end{split}
\end{equation}
\item The Lagrange derivative $\delta$:
\begin{equation}\label{matsyuk:delta}
\delta=(\iota_0 - d_T\iota_1 + \frac 1{2!}d_T^2\iota_2 - \frac 1{3!}d_T^3\iota_3+\ldots)\,d\,,
\end{equation}
that satisfies $\delta^2=0$.
\end{list}
Let some system of the third order ordinary differential equations
$\mathcal E_i(x^j,u^j,\dot u^j, \ddot u^j)$ be put in the shape of
a covariant object $\epsilon$:
\[
\epsilon=\mathcal E_i(x^j,u^j,\dot u^j, \ddot u^j)dx^i\,.
\]
The variationality criterion reads:
If $\delta \epsilon=0$, then the system $\mathcal E_i$ is variational,
{\it i.e.} locally there exists some function $L$, such that $\epsilon=\delta L$.

The right action of the prolonged group
$GL_{(2)}(\mathbb R)\overset{\mathrm{def}}=\overset{\:\circ}J{}^2_0(\mathbb R,\mathbb R)_0$ of
parameter transformations (invertible transformations of the independent variable $t$)
on $T^2M$
gives rise to the so-called fundamental fields on $T^2M$:
\[
\zeta_1=u^i\dfrac{\partial}{\partial u^i}+2\dot u^i\dfrac{\partial}{\partial\dot
u^i},\quad \zeta_2=u^i\dfrac{\partial}{\partial\dot u^i}\,.
\]

A function $f$ defined on $T^2M$ does not depend on the change of independent
variable $t$ (so--called parameter--independence) if and only if
\begin{equation}\label{matsyuk:parind}
\zeta_1f=0,\quad\zeta_2f=0\,.
\end{equation}

On the other hand, a function $L$ defined on $T^2M$ constitutes a parameter--independent variational
problem with the functional
$
\int L(x^j,u^j,\dot u^j)dt
$
 if and only if the following Zermelo conditions are satisfied:
\begin{equation}\label{matsyuk:Zermelo}
\zeta_1L=L,\quad\zeta_2L=0\,.
\end{equation}

Let us introduce the generalized momenta:
\[p^{(1)}_i=\dfrac{\partial L}{\partial\dot u^i},\quad
p_i=\dfrac{\partial L}{\partial u^i}-d_Tp^{(1)}_i\,.
\]
These satisfy the relation:
\begin{equation}\label{matsyuk:momenta relation}
p^{(1)}{}_idu^i+p_idx^i=\iota_1dL-\dfrac{1}{2}d_T\iota_2dL\,.
\end{equation}

The Euler--Poisson equation is given by $\delta L=0$, or, equivalently, by
\[
\dot p_idx^i=\frac{\partial L}{\partial x^i}dx^i\,.
\]

\enlargethispage*{20pt}
The Hamilton function is given by:
\[
H=p^{(1)}_i\dot u^i+p_iu^i-L\,.\]
\begin{lemma}\label{matsyuk:ham}
\[
H=\zeta_1L-d_T\zeta_2L-L\,.
\]
\end{lemma}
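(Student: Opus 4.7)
The plan is to verify the identity by direct calculation, starting from the right-hand side and rewriting it until the definition of $H$ appears. All the needed ingredients have been introduced in the preceding paragraphs: the definitions of $\zeta_1,\zeta_2$, the definition of $d_T$ as a derivation, and the formulas for $p^{(1)}_i$ and $p_i$.

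First I would expand
\[
\zeta_1 L=u^i\dfrac{\partial L}{\partial u^i}+2\dot u^i\dfrac{\partial L}{\partial \dot u^i}=u^i\dfrac{\partial L}{\partial u^i}+2\dot u^i p^{(1)}_i,
\]
and similarly compute $\zeta_2 L=u^i p^{(1)}_i$. Next, since $d_T$ is a derivation of degree zero, the Leibniz rule gives
\[
d_T\zeta_2 L=\dot u^i p^{(1)}_i+u^i d_T p^{(1)}_i.
\]

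Then I would substitute these into $\zeta_1 L-d_T\zeta_2 L-L$, collect the $u^i$-terms, and recognize the combination $\partial L/\partial u^i-d_T p^{(1)}_i$ as exactly~$p_i$. This yields
\[
\zeta_1 L-d_T\zeta_2 L-L=u^i p_i+\dot u^i p^{(1)}_i-L,
\]
which is the definition of $H$.

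The only possible obstacle is notational bookkeeping: one must apply $d_T$ to a product correctly and remember the factor $2$ in the second summand of $\zeta_1$, so that it cancels against one copy of $\dot u^i p^{(1)}_i$ coming out of $d_T\zeta_2 L$. There is no deeper issue, and the proof is essentially a one-line computation once the derivation property of $d_T$ is invoked.
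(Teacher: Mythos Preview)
Your computation is correct: expanding $\zeta_1L$ and $\zeta_2L$, applying the Leibniz rule for $d_T$ to the product $u^ip^{(1)}_i$, and recognizing $\partial L/\partial u^i-d_Tp^{(1)}_i=p_i$ yields exactly $p_iu^i+p^{(1)}_i\dot u^i-L=H$. The paper states the lemma without proof, so there is nothing to compare against; your direct verification is the natural argument and fills the gap cleanly.
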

\begin{proposition}\label{matsyuk:Ham}
If a function $L_{\mathrm{II}}$ is parameter--independent and a function $L_{\mathrm
I}$ constitutes a parameter-independent variational problem, then
 $L_{\mathrm{II}}$ is constant along the extremals of $L=L_{\mathrm{II}}+L_{\mathrm{I}}$.
\end{proposition}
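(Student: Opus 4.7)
The plan is to reduce the proposition to Lemma~\ref{matsyuk:ham} via the linearity of the Hamiltonian construction, and then invoke conservation of the Hamilton function along extremals. Since the Lagrange derivative $\delta$, the total derivative $d_T$, and the fundamental fields $\zeta_1,\zeta_2$ are all linear operators, the associated momenta and the Hamilton function inherit the decomposition $H=H_{\mathrm{I}}+H_{\mathrm{II}}$ corresponding to $L=L_{\mathrm{I}}+L_{\mathrm{II}}$, and likewise the Euler--Poisson expression $\delta L$ splits accordingly.

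First I would apply Lemma~\ref{matsyuk:ham} separately to each summand. For the parameter--independent piece $L_{\mathrm{II}}$, conditions~(\ref{matsyuk:parind}) give $\zeta_1L_{\mathrm{II}}=0$ and $\zeta_2L_{\mathrm{II}}=0$, so the lemma yields $H_{\mathrm{II}}=-L_{\mathrm{II}}$. For the Lagrangian $L_{\mathrm{I}}$ satisfying the Zermelo conditions~(\ref{matsyuk:Zermelo}), one has $\zeta_1L_{\mathrm{I}}=L_{\mathrm{I}}$ and $\zeta_2L_{\mathrm{I}}=0$, whence $H_{\mathrm{I}}=L_{\mathrm{I}}-0-L_{\mathrm{I}}=0$. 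Adding these, the Hamilton function of the combined Lagrangian is simply $H=-L_{\mathrm{II}}$.

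The remaining step is to show that $H$ is a first integral of the Euler--Poisson equations of $L$. This is a short direct calculation: differentiating $H=p^{(1)}_i\dot u^i+p_iu^i-L$ totally and using $p^{(1)}_i=\partial L/\partial\dot u^i$ together with $p_i=\partial L/\partial u^i-d_Tp^{(1)}_i$, all terms except $\dot p_iu^i-(\partial L/\partial x^i)u^i$ cancel; on extremals the Euler--Poisson equation $\dot p_i=\partial L/\partial x^i$ kills this remainder, so $d_TH=0$. Consequently $d_TL_{\mathrm{II}}=0$ along the extremals of $L$, proving the claim.

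The only delicate point is the conservation of $H$; I would either verify it by the bookkeeping above or note it as the third--order Ostrohrads'kyj analogue of the classical energy integral for autonomous Lagrangians. Everything else is an algebraic consequence of Lemma~\ref{matsyuk:ham} combined with the hypotheses~(\ref{matsyuk:parind}) and~(\ref{matsyuk:Zermelo}).
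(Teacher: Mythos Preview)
Your argument is correct and follows the same route as the paper: apply Lemma~\ref{matsyuk:ham} together with conditions~(\ref{matsyuk:parind}) and~(\ref{matsyuk:Zermelo}) to conclude that the Hamilton function of $L=L_{\mathrm{I}}+L_{\mathrm{II}}$ equals $-L_{\mathrm{II}}$, then use conservation of $H$ along extremals. The only cosmetic difference is that the paper applies the lemma directly to the sum $L$ rather than splitting $H=H_{\mathrm{I}}+H_{\mathrm{II}}$, and it simply cites that $H$ is a constant of motion rather than re-deriving it as you do.
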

\begin{proof}
By Lemma~\ref{matsyuk:ham} and in course of the properties~(\ref{matsyuk:parind})
and~(\ref{matsyuk:Zermelo}) we calculate $H_{L_{\mathrm{II}}+L_{\mathrm{I}}}=\zeta_1(L_{\mathrm{II}}+L_{\mathrm{I}})
 -d_T\zeta_2(L_{\mathrm{II}}+L_{\mathrm{I}})-L=-L_{\mathrm{II}}$.
But as far as the Hamilton function is constant of motion, so is the~$L_{\mathrm{II}}$.
\end{proof}

\section{The Lagrange derivative in Riemannian space}
In Riemannian space with symmetric connection the covariant
differential of a vector field $\boldsymbol\xi$ is a vector field
valued semibasic differential form~$D\boldsymbol\xi$ calculated
according to the formula
\begin{equation}\label{matsyuk:CovDiff}
(D\boldsymbol\xi)^i=d\xi^i+\Gamma^i_{lj}\xi^jdx^l.
\end{equation}
The fundamental application of the curvature tensor, from which this
note profits, provides the commutator of the subsequent derivations, the one
that replaces the known Schwarz lemma:
\begin{equation}\label{matsyuk:Duprime}
(D\bu){\boldsymbol'}{}^i=(D(\bw))^i+R_{ljq}{}^iu^ju^qdx^l\,.
\end{equation}

We also recall that, on the other hand, the first order derivations commute:
\begin{equation}\label{matsyuk:commute}
(dx)\boldsymbol'=D\bu\,.
\end{equation}

Given some local coordinate expression of a function,
\[
L(x^i, u^i, \dot u^i)\,,
\]
we wish to introduce generalized momenta $\pi_i$ and $\pi^{(1)}{}_i$, calculated
with respect to the alternative set of coordinates in $T^2M$, namely,
$x^i$,$u^i$, $u\boldsymbol'^i$, where the transition functions are presented
by~(\ref{matsyuk:uprime}).
\begin{definition}\label{matsyuk:Def 3.1}
Let
\[
\pi^{(1)}{}_i=\dfrac{\partial L}{\partial u\boldsymbol'^i}\,,\quad \pi_i=\dfrac{\partial L}{\partial u^i}-\pi^{(1)}\boldsymbol'{}_i\,.
\]
\end{definition}
\begin{proposition}\label{matsyuk:Prop 3.1}
In Riemannian space the generalized momenta satisfy the relation, analogous
to~(\ref{matsyuk:momenta relation}):
\[
\bq D\bu+\bp dx=\iota_1dL-\dfrac{1}{2}(\iota_2dL)\bpr
\]
\end{proposition}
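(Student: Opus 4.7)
The plan is to expand both sides directly in the alternative jet coordinates $(x^i,u^i,u\bpr{}^i)$ on $T^2M$ and match them, paralleling the flat calculation that produced~(\ref{matsyuk:momenta relation}). The pivotal algebraic computation is the action of $\iota_1$ and $\iota_2$ on the new fiber differential $du\bpr{}^i$. Differentiating~(\ref{matsyuk:uprime}) gives
\[
du\bpr{}^i = d\dot u^i + (\partial_k\Gamma^i_{lj})u^lu^j\,dx^k + 2\Gamma^i_{lj}u^l\,du^j.
\]
Since $\iota_r$ annihilates functions and kills $dx^k$ by~(\ref{matsyuk:iota}), only the first and last terms contribute; using the symmetry of the connection I would establish
\[
\iota_1 du\bpr{}^i = 2(D\bu)^i,\qquad \iota_2 du\bpr{}^i = 2\,dx^i.
\]

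Second, I rewrite $dL$ in the new coordinates, $dL=(\partial L/\partial x^i)\,dx^i+(\partial L/\partial u^i)\,du^i+\pi^{(1)}{}_i\,du\bpr{}^i$, where the partial derivatives are taken at fixed $u\bpr$. Applying $\iota_1$ and $\iota_2$ termwise with the formulas above yields
\[
\iota_2 dL = 2\pi^{(1)}{}_i\,dx^i,\qquad \iota_1 dL = (\partial L/\partial u^i)\,dx^i + 2\pi^{(1)}{}_i(D\bu)^i.
\]
Next I apply the covariant total derivative, which acts as a derivation on forms with $(dx^i)\bpr=(D\bu)^i$ by~(\ref{matsyuk:commute}), to the $1$-form $\iota_2 dL$ to obtain
\[
\tfrac12(\iota_2 dL)\bpr = \pi^{(1)}\bpr{}_i\,dx^i + \pi^{(1)}{}_i(D\bu)^i.
\]

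Subtraction cancels one copy of $\pi^{(1)}{}_i(D\bu)^i$ and leaves
\[
\iota_1 dL - \tfrac12(\iota_2 dL)\bpr = \pi^{(1)}{}_i(D\bu)^i + \bigl(\partial L/\partial u^i - \pi^{(1)}\bpr{}_i\bigr)\,dx^i,
\]
which is precisely $\bq D\bu+\bp\,dx$ by Definition~\ref{matsyuk:Def 3.1}. The main obstacle is step one: correctly tracking how the non-tensorial connection term in~(\ref{matsyuk:uprime}) interacts with the purely vertical derivations $\iota_r$, and verifying that the symmetrization $\Gamma^i_{lj}u^l\,du^j=\Gamma^i_{jl}u^j\,du^l$ produces the covariant differential $(D\bu)^i$ inside $\iota_1 du\bpr{}^i$ without spurious terms. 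Once that interior-derivative calculation is in hand, the rest is pure Leibniz bookkeeping controlled by~(\ref{matsyuk:commute}) and Definition~\ref{matsyuk:Def 3.1}.
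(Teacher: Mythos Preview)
Your proposal is correct and follows essentially the same route as the paper: both compute $\iota_1 d\bw=2D\bu$ and $\iota_2 d\bw=2dx$ from~(\ref{matsyuk:uprime}) and~(\ref{matsyuk:iota}), express $dL$ in the coordinates $(x,u,\bw)$, and finish via the Leibniz rule together with $(dx)\bpr=D\bu$ from~(\ref{matsyuk:commute}). Your final subtraction is in fact a bit more direct than the paper's chain of rearrangements, but the mathematical content is identical.
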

\begin{proof}
First let us calculate by the reason of formulas~(\ref{matsyuk:iota})
and~(\ref{matsyuk:uprime}):
\[
\iota_1 d\bu=dx,\quad \iota_1d\bw=2D\bu,\quad  \iota_2d\bw=2dx\,.
\]
For the differential of Lagrange function,
\begin{equation}\label{matsyuk:dL}
dL=\frac{\partial L}{\partial x}
+\frac{\partial L}{\partial \bu}d\bu+\frac{\partial L}{\partial \bw}d\bw\,,
\end{equation}
we then check:
\[
\iota_2dL=2\,\frac{\partial L}{\partial \bw} dx\,.
\]

Now calculate:
\begin{align*}
\iota_1dL & =\frac{\partial L}{\partial \bu}dx+
2\,\frac{\partial L}{\partial \bw}D\bu \\
&=\frac{\partial L}{\partial \bu}dx+
2\,\left(\frac{\partial L}{\partial \bw}dx\right)'
-2\,\left(\frac{\partial L}{\partial \bw}\right)'dx \\
&=
\frac{\partial L}{\partial \bu}dx+ \left(\iota_2dL\right)'
-2\,\left(\frac{\partial L}{\partial \bw}\right)' dx\,,
\end{align*}
from where and from the Definition~\ref{matsyuk:Def 3.1} it follows immediately that
\begin{align*}
\bp dx&=\iota_1dL-(\iota_2dL)\bpr+
\left(\frac{\partial L}{\partial \bw}\right)' dx\\
&=
\iota_1dL-\dfrac{1}{2}(\iota_2dL)\bpr
-\dfrac{1}{2}(\iota_2dL)\bpr+(\bq)\bpr dx \\
&=
\iota_1dL-\dfrac{1}{2}(\iota_2dL)\bpr
-(\bq dx)\bpr+(\bq)\bpr dx \\
&=
\iota_1dL-\dfrac{1}{2}(\iota_2dL)\bpr - \bq D\bu
\end{align*}
by virtue of~(\ref{matsyuk:commute}).
\end{proof}
\begin{proposition}
In Riemannian space the Euler--Poisson equation for a second order Lagrange
function reads:
\begin{equation}\label{matsyuk:E-P}
\bp\bpr dx+\pi^{(1)}{}_iR_{ljq}{}^iu^ju^qdx^l
=\frac{\partial L}{\partial x^l}dx^l
-\frac{\partial L}{\partial u^i}\Gamma^i_{lj}u^jdx^l
-\frac{\partial L}{\partial u'^i}\Gamma^i_{lj}u'^jdx^l
\end{equation}
\end{proposition}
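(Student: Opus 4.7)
My plan is to rewrite the classical Euler--Poisson equation $\delta L=0$ in covariant coordinates by applying one additional prime (total covariant derivative) to the identity from Proposition~\ref{matsyuk:Prop 3.1} and using the curvature commutator~(\ref{matsyuk:Duprime}). The proposition is a \emph{rewriting} statement; the variational content is already encapsulated in $\delta L=0$, so the work is purely algebraic bookkeeping relative to the change of fiber coordinates~(\ref{matsyuk:uprime}).

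First I would take $\bpr$ of both sides of the identity $\bq D\bu+\bp dx=\iota_1dL-\frac{1}{2}(\iota_2dL)\bpr$. On the left, Leibniz together with~(\ref{matsyuk:commute}) gives $(\bp dx)\bpr=\bp\bpr dx+\bp D\bu$, while~(\ref{matsyuk:Duprime}) produces exactly the sought curvature contribution from $(\bq D\bu)\bpr=\bq\bpr D\bu+\bq D\bw+\pi^{(1)}{}_iR_{ljq}{}^iu^ju^qdx^l$. On the right, after applying $\bpr$, I would expand using the coordinate form~(\ref{matsyuk:dL}) of $dL$ and the auxiliary identities $\iota_1 d\bu=dx$, $\iota_1 d\bw=2D\bu$, $\iota_2 d\bw=2dx$ already established in the proof of Proposition~\ref{matsyuk:Prop 3.1}, so as to express the result in terms of the partials $\partial L/\partial x^l$, $\partial L/\partial u^i$, $\partial L/\partial u'^i$.

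Next I would invoke $\delta L=0$: in the original coordinates this is $\dot p_i dx^i=(\partial L/\partial x^i|_{(\bu,\du)})\,dx^i$, and the transition functions~(\ref{matsyuk:uprime}) convert the non-covariant partial $\partial L/\partial x^l|_{(\bu,\du)}$ into $\partial L/\partial x^l|_{(\bu,\bw)}-\partial L/\partial u'^i\,\Gamma^i_{lj,{\cdot}}$-type corrections, which is precisely what produces the $\Gamma$-terms on the right-hand side of~(\ref{matsyuk:E-P}). All the extra pieces $\bq\bpr D\bu$, $\bq D\bw$ and $\bp D\bu$ on the left must then cancel against their counterparts arising from the right after $\bpr$ is distributed; this cancellation is guaranteed by the fact that the relation in Proposition~\ref{matsyuk:Prop 3.1} is an identity in the jet variables rather than an on-shell equation.

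The main obstacle is the bookkeeping of coordinate changes: one must consistently track how $\partial_{x^l}$ and $\partial_{u^i}$ differ between the coordinate systems $(x,\bu,\du)$ and $(x,\bu,\bw)$, and how the operator $\bpr$ interacts with quantities that are themselves defined via covariant differences (such as $\pi_i$). Once those transitions are made explicit, the formula in~(\ref{matsyuk:E-P}) follows by collecting the surviving terms, with the curvature contribution being the unique piece that has no counterpart in the flat (Euclidean) Euler--Poisson equation and is delivered precisely by the commutator~(\ref{matsyuk:Duprime}).
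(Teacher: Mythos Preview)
Your plan is correct and shares the paper's core idea---apply a covariant derivative to the invariant momentum relation of Proposition~\ref{matsyuk:Prop 3.1} and let the curvature commutator~(\ref{matsyuk:Duprime}) generate the $R$-term---but the paper's execution is tighter and avoids your detour through the old coordinates.

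The paper does not prime both sides of Proposition~\ref{matsyuk:Prop 3.1} and then separately invoke $\delta L=0$ in the $(x,\bu,\du)$ chart. Instead it reads off from~(\ref{matsyuk:delta}) that
\[
\delta L=\iota_0 dL-d_T\!\left(\iota_1 dL-\tfrac12 d_T\iota_2 dL\right),
\]
and Proposition~\ref{matsyuk:Prop 3.1} identifies the bracket with the geometric invariant $\bp\,dx+\bq\,D\bu$. Since $d_T$ agrees with $\bpr$ on invariants, one only has to compute $(\bp\,dx+\bq\,D\bu)\bpr$ via Leibniz, (\ref{matsyuk:commute}) and~(\ref{matsyuk:Duprime}); then substitute $\iota_0 dL=dL$ expanded by~(\ref{matsyuk:dL}) in the $(x,\bu,\bw)$-coordinates and use~(\ref{matsyuk:CovDiff}). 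The $\Gamma$-terms on the right of~(\ref{matsyuk:E-P}) fall out automatically as the difference between $d\bu,d\bw$ and $D\bu,D\bw$; no comparison of $\partial L/\partial x^l$ between coordinate systems is ever needed. Your route reaches the same destination, but the step ``convert $\partial L/\partial x^l|_{(\bu,\du)}$ via the transition functions'' is redundant bookkeeping that the paper's organization eliminates.
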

\begin{proof}
From~(\ref{matsyuk:delta}) and from Proposition~\ref{matsyuk:Prop 3.1}
we obtain
\[\delta L=\iota_0dL-d_T(\bp dx+\bq D\bu).
\]
While the expression
in the parenthesis constitutes a geometrical invariant, it is possible to
replace $d_T$ by the covariant derivative, after what
by direct calculation  we obtain in virtue of (\ref{matsyuk:commute}) and
of~(\ref{matsyuk:Duprime}):
\begin{multline*}
\delta L=\iota_0dL-(\bp dx + \bq D\bu)\bpr
=\iota_0dL-\bp\bpr dx
-\big(\bp  + \bq\bpr \big) D\bu
- \bq (D\bu)\bpr \\
=\iota_0dL-\bp\bpr dx -\frac{\partial L}{\partial \bu}D\bu
-\pi^{(1)}{}_i \left(D(\bw)^i+R_{ljq}{}^iu^ju^qdx^l\right),
\end{multline*}
 and the proof ends by substituting~(\ref{matsyuk:dL}) into $i_0dL\equiv dL$ here
 and by applying~(\ref{matsyuk:CovDiff}).
\end{proof}

\section{The two-dimensional variational concircular geometry}
As promised, we first cite one result, concerning the invariant inverse
variational problem in two dimensional Euclidean space~\cite{matsyuk:MatMet}.
\begin{proposition}
Let some system of third order differential equations
\begin{equation}\label{matsyuk:epsilon}
 \mathcal E_i(x^j,u^j,\dot u^j, \ddot u^j)=0
\end{equation}
satisfy the conditions:
\begin{romanlist}
\item
$
\delta \,\mathcal E_idx^i\,=\,0
$
\item
The system~(\ref{matsyuk:epsilon}) possesses Euclidean symmetry
\item
The Euclidean geodesics $\du=\boldsymbol0$ enter in the set of solutions
of~(\ref{matsyuk:epsilon})
\item
$d_Tk=0$ along the solutions of~(\ref{matsyuk:epsilon})
\end{romanlist}
Then
\[
\mathcal E_i
=\frac{e_{ij}\ddot u^j}{\|\bu\|^3}-3\,\frac{(\pr{\du}{\bu})}{\|\bu\|^5}e_{ij}\dot u^j
+ m\,\frac{\|\bu\|^2\dot u_i-(\pr{\du}{\bu})u_i}{\|\bu\|^3}\;.
\]
\end{proposition}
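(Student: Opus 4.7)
My plan is to use the four conditions one after another: symmetry reduces $\mathcal E_i$ to a rotation-equivariant template, variationality restricts its top-order dependence, and the geodesic and curvature-constancy conditions fix the surviving coefficients up to one scalar~$m$.

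By translation invariance in~(ii), $\mathcal E_i=\mathcal E_i(\bu,\du,\ddot{\bu})$. Rotation invariance, together with the fact that in two dimensions the only invariant tensors are $\delta_{ij}$ and $e_{ij}$, forces $\mathcal E_i$ to be a linear combination of $u_i,\dot u_i,\ddot u_i,e_{ij}u^j,e_{ij}\dot u^j,e_{ij}\ddot u^j$ with scalar coefficients that are functions of the rotation invariants of $(\bu,\du,\ddot{\bu})$. Condition~(i) then bites hardest at the top order. For $\mathcal E_i$ to be the Euler--Poisson expression of some second-order Lagrangian $L$, the equation must in fact be third order, which forces $L$ affine in $\du$, say $L=A_j(x,\bu)\dot u^j+B(x,\bu)$. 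A direct computation of $\delta L$ then shows that $\mathcal E_i$ is affine in $\ddot u$ with coefficient matrix $\partial_{u^i}A_j-\partial_{u^j}A_i$, which depends only on $(x,\bu)$ and is antisymmetric in~$(i,j)$. In two dimensions every antisymmetric invariant matrix is proportional to $e_{ij}$ with scalar factor $\gamma(\nbu^{2})$, so
\[
\mathcal E_i=\gamma(\nbu^{2})\,e_{ij}\ddot u^j+\mathcal F_i(\bu,\du).
\]

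Condition~(iii) now forces $\mathcal F_i(\bu,\boldsymbol 0)=0$: on a straight line $\du=\boldsymbol 0$ entails $\ddot{\bu}=\boldsymbol 0$, so the display must vanish identically in~$\bu$, killing the pieces of $\mathcal F_i$ that survive at $\du=\boldsymbol 0$. The remainder is a rotation-equivariant combination of $\dot u_i$, $(\pr{\du}{\bu})u_i$, $e_{ij}\dot u^j$ and $(\pr{\du}{\bu})e_{ij}u^j$ whose coefficients still depend on $\nbu^{2}$ and the velocity invariants. Condition~(iv) is then exploited by contracting $\mathcal E_i=0$ with $u^i$ to isolate the scalar $u^ie_{ij}\ddot u^j$ and substituting into the expansion of $d_Tk$, where the Frenet curvature is $k=e_{lm}u^l\dot u^m/\nbu^{3}$. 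Demanding $d_Tk\equiv 0$ identically in the free variables produces an algebraic relation that forces $\gamma=1/\nbu^{3}$ (up to an overall normalization) and pins the coefficient of $e_{ij}\dot u^j$ in $\mathcal F_i$ to the claimed $-3(\pr{\du}{\bu})/\nbu^{5}$.

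The lower-order Helmholtz conditions, applied next to $\partial\mathcal E_i/\partial\dot u^j$ and $\partial\mathcal E_i/\partial u^j$, then close the system and fix all remaining coefficients of $\mathcal F_i$ except a single constant of integration, which we call~$m$. The main obstacle will be the bookkeeping of these Helmholtz conditions: one must recognize that the ``mass term'' $m(\nbu^{2}\dot u_i-(\pr{\du}{\bu})u_i)/\nbu^{3}$ is the unique additional variational piece that remains, the geometric reason being that it is Euclidean-orthogonal to~$\bu$ and so contributes nothing to $u^i\mathcal F_i$, making it invisible to condition~(iv).
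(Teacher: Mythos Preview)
The paper does not prove this proposition; it is quoted as a result from Ref.~\cite{matsyuk:MatMet} (``we first cite one result, concerning the invariant inverse variational problem\ldots''), so there is no in-text argument to compare your plan against. What the paper does provide is the Lagrangian~(\ref{matsyuk:flat}) that reproduces the displayed $\mathcal E_i$, which is an \emph{a posteriori} verification of~(i) rather than a derivation from~(i)--(iv).

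Your outline is a plausible reconstruction of how such a classification goes, and the overall architecture---reduce by symmetry, impose the top-order Helmholtz condition to get the antisymmetric leading term, use~(iii) to kill the $\du=\boldsymbol0$ residue, use~(iv) to fix $\gamma$ and the $e_{ij}\dot u^j$ coefficient, then close with the lower-order Helmholtz relations---is the standard one for invariant inverse problems of this type. Two places deserve more care if you actually carry it out. First, your sentence ``the equation must in fact be third order, which forces $L$ affine in $\du$'' is phrased backwards: the system is \emph{given} as third order, and it is the vanishing of the would-be fourth-order term in the Euler--Poisson expression (coefficient $\partial^2L/\partial\dot u^i\partial\dot u^j$) that forces affinity. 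Second, condition~(iv) says $d_Tk=0$ \emph{along solutions}, not identically; your contraction trick correctly extracts $e_{ij}u^i\ddot u^j$ from the equations, but the resulting identity in $(\bu,\du)$ only pins the ratio of $\gamma$ to the $e_{ij}\dot u^j$ coefficient, and the overall scale is a choice of normalization, as you note. The genuinely laborious part---showing the lower Helmholtz conditions leave exactly the one-parameter ``mass'' family and nothing else---is where the cited paper does the work you have only gestured at.
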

This system may be obtained from the Lagrange function
\begin{equation}\label{matsyuk:flat}
L=\dfrac{e_{ij} u^i \dot u^j}{\|\bu\|^3}-m\,\|\bu\|\,.
\end{equation}
The first addend in~(\ref{matsyuk:flat}) is sometimes called {\em
the signed Frene curvature\/} in $\mathbb E^2$. This, along
with the observation that in two dimensional Riemannian space the Frenet curvature
\begin{equation}\label{matsyuk:Frene}
k=\dfrac{\nw{\bu}{\bw}}{\nbu^3}=\pm\dfrac{*\,\pw{\bu}{\bw}}{\nbu^3}
\end{equation}
 depends linearly on $\bw$ and
thus produces at most third order Euler--Poisson equation,
suggests the next assertion, based
on Proposition~\ref{matsyuk:Ham}:
\begin{proposition}
The variational functional $\int\, (k - m\,\|\bu\|)\,dt$ produces
geodesic circles in two dimensional Riemannian space.
\end{proposition}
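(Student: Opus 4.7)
The plan is to invoke Proposition~\ref{matsyuk:Ham} with the decomposition $L_{\mathrm{II}}=k$ and $L_{\mathrm{I}}=-m\,\nbu$. Granting its two hypotheses, that proposition yields at once $d_Tk=0$ along every extremal of $L=L_{\mathrm{II}}+L_{\mathrm{I}}$; since a curve in a two--dimensional Riemannian manifold is by definition a geodesic circle precisely when its Frenet curvature is constant, this identifies the extremals as geodesic circles. As already remarked in the introduction, the Euler--Poisson equation $\delta L=0$ is automatically of order at most three, because by~(\ref{matsyuk:Frene}) the function $k$ is linear in $\bw$, so that $\partial L/\partial\dot u^i$ is independent of $\dot u$.

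The main work will be to verify the two hypotheses. For $L_{\mathrm{I}}=-m\,\nbu$ the Zermelo conditions~(\ref{matsyuk:Zermelo}) are immediate, since $\nbu^2=g_{ij}u^iu^j$ does not depend on $\dot u$ and is homogeneous of degree two in $u$. For $L_{\mathrm{II}}=k$ I would establish the parameter--independence conditions~(\ref{matsyuk:parind}), i.e.\ $\zeta_1k=\zeta_2k=0$. Starting from the coordinate definition~(\ref{matsyuk:uprime}) of $\bw$ I would first read off the auxiliary identities $\zeta_1\bw=2\bw$ and $\zeta_2\bw=\bu$; combined with $\zeta_1\bu=\bu$, $\zeta_2\bu=0$ and the scalar expansion
\[
\nw{\bu}{\bw}^2=\nbu^2\,\|\bw\|^2-(\bu\bcdot\bw)^2,
\]
these force the numerator and the denominator of the expression for $k$ in~(\ref{matsyuk:Frene}) to be $\zeta_1$-homogeneous of degree three, whence $\zeta_1k=0$; the identity $\zeta_2(\bu\wedge\bw)=\bu\wedge\bu=0$ then delivers $\zeta_2k=0$.

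The one point I expect to require genuine care is the verification that $\zeta_1k=0$ really survives the Christoffel correction hidden in~(\ref{matsyuk:uprime}): a priori the inhomogeneous correction to $\dot u$ might spoil the flat--space homogeneity count. The observation that resolves this is that $\Gamma^i_{lj}u^lu^j$ is itself quadratic in $u$ and hence $\zeta_1$-homogeneous of the same weight as $\dot u$; the correction is therefore fully compatible with the count, and the argument carries over from the flat case without modification, thereby validating the application of Proposition~\ref{matsyuk:Ham} and completing the proof.
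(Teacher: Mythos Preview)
Your proposal is correct and follows exactly the route the paper intends: the proposition is stated there as an immediate consequence of Proposition~\ref{matsyuk:Ham} with the decomposition $L_{\mathrm{II}}=k$, $L_{\mathrm{I}}=-m\nbu$, and no further proof is given beyond that reference. Your explicit verification of~(\ref{matsyuk:parind}) for $k$ and of~(\ref{matsyuk:Zermelo}) for $-m\nbu$ (including the check that the Christoffel term in~(\ref{matsyuk:uprime}) is $\zeta_1$-homogeneous of weight two) supplies details the paper leaves to the reader, but the argument is the same.
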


It remains to calculate the Euler--Poisson expression for the
Lagrange function~(\ref{matsyuk:Frene}).
In the process of calculations it is convenient to profit from the
 exeptional
properties of vector operations in two dimensions. Namely, the following two
relations for arbitrary vectors hold:
\begin{gather*}
\bpw a b \bcdot \bpw v w=\pm\,\bnw a b  \bnw v w \,,\\
\intertext{and}
\bnw a b (\pr b c)+\bnw b c (\pr a b) = \bnw a c (\pr b b)\,.
\end{gather*}
The above simplifications bring much release to otherwise very
laborious calculations.

We start with the
momentum $\bq$:
\begin{align*}
&\pm\bq dx=-\dfrac{\pw{dx}{\bu}\bcdot\pw{\bu}{\bw}}{\nbu^3\nw{\bu}{\bw}}
=-\dfrac{\nw{dx}{\bu}}{\nbu^3};\\
&\pm\bq\bpr dx= -\dfrac{\nw{dx}{\bw}}{\nbu^3}
+3\,\dfrac{\nw{dx}{\bu}}{\nbu^5}(\bu\bcdot\bw).
\end{align*}

Based on Definition~\ref{matsyuk:Def 3.1} we now calculate~$\bp$:
\[
\pm\bp dx=2\,\dfrac{\nw{dx}{\bw}}{\nbu^3}-3\,\dfrac{\nw{dx}{\bu}(\bu\bcdot\bw)}{\nbu^5}
-3\,\dfrac{(dx\bcdot\bu)\nw{\bu}{\bw}}{\nbu^5}
=-\dfrac{\nw{dx}{\bw}}{\nbu^3}
\]
In terms of the Hodge star operator the derivative of the momentum~$\bp$
may be put in the form
\[
\bp\bpr=\dfrac{*\bu\bpr\bpr}{\nbu^3}-3\,\dfrac{*\bu\bpr}{\nbu^5}(\bu\bcdot\bw)\,,
\]
which agrees with the flat Euclidean case.

For the Lagrange function~(\ref{matsyuk:Frene}) it is easy to verify that
\[
\dfrac{\partial k}{\partial x^l}dx^l-\dfrac{\partial k}{\partial u^i}\Gamma^i_{lj}u^jdx^l
-\dfrac{\partial k}{\partial u'^i}\Gamma^i_{lj}u'jdx^l=0.
\]
The proof consists in direct calculations and founds on the skew-symmetric
property of the Christoffel symbols in Riemannian geometry:
\[
g_{jl}\Gamma^l_{qi}+g_{il}\Gamma^l_{qj}=\frac{\partial g_{ij}}{\partial x^q}.
\]

Going back to the Euler--Poisson equation~(\ref{matsyuk:E-P}) it is now
facile to obtain the variational equation
for the full  Lagrange function $L=k-m\nbu$:
\begin{equation}\label{matsyuk:vareqcircle}
-\dfrac{*\bu\bpr\bpr}{\nbu^3}+3\,\dfrac{*\bu\bpr}{\nbu^5}(\bu\bcdot\bw)\
+ m\,\frac{\|\bu\|^2\bw-(\pr{\bw}{\bu})\bu}{\|\bu\|^3}
=\pi^{(1)}{}_iR_{ljq}{}^iu^ju^q.
\end{equation}

The term on the right in pseudo-Riemannian case physically may be interpreted
as a spin force\cite{matsyuk:Mathisson} if, following Ref.~\refcite{matsyuk:Leiko},
we formally introduce spin tensor as $S=\pw{\bu}{\bw}$.

In fact, one checks that in terms of the tensor~$S$ the right hand side of
equation~(\ref{matsyuk:vareqcircle}) may be rewritten as
$\dfrac{R_{ljqi}{}u^jS^{qi}}{\nbu\nw{\bu}{\bw}}$.

\section{Acknowledgments}
This work was supported by the Grant GA\v CR 201/06/0922 of Czech Science Foundation.

\end{document}